\numberwithin{equation}{section}
\theoremstyle{definition}
\newtheorem{theorem}{\bf Theorem}[section]
\newtheorem{lemma}[theorem]{\bf Lemma}
\theoremstyle{definition}
\newtheorem{question}[theorem]{\bf Question}
\newtheorem*{theorem1}{\bf Theorem A}
{

}
\newcommand{\mm}[1]{\mathrm{#1}}
\newcommand{\mb}[1]{\mathbb{#1}}
\newcommand{\mc}[1]{\mathcal{#1}}
\begin{document}

\title[Non-negative curvature on certain product manifolds]{Non-negative curvature on certain product manifolds}

\author[Wen Shen]{Wen Shen}
\email{shenwen121212@163.com}

\begin{abstract}
Let $G/H$ be a closed, simply connected homogeneous manifold. Suppose every stable class of real vector bundles over $G/H$ contains a homogeneous bundle. 
Then, for any closed, simply connected smooth manifold $M$ homotopy equivalent to $G/H$, there exists $n>\mm{dim}(M)$ such that the product manifold $M\times S^{n}$ admits a metric with non-negative sectional curvature. Many homogeneous manifolds satisfy this assumption, including simply connected compact rank-one symmetric spaces, and among others.
\end{abstract}

\subjclass[2020]{Primary 53C20, 55R50}

\maketitle

\section{Introduction}\label{intro}

In this paper, all manifolds are assumed to be smooth.

Since the inception of global Riemannian geometry, a central question has been whether a given manifold admits a Riemannian metric with positive sectional curvature, non-negative sectional curvature, positive Ricci curvature, positive scalar curvature, or other curvature properties. Metrics with curvature bounds play a pivotal role in elucidating the intrinsic geometric and topological characteristics of manifolds, and their existence (or non-existence) often serves as a key indicator of the manifold's underlying structure.

In the realm of simply connected manifolds of dimension 
$n\ge 5$, significant progress has been made concerning obstructions to the existence of metrics with positive scalar curvature \cite{GL, Stolz}.
However, there is currently no analogous general result that comprehensively characterizes when a manifold can admit a non-negatively curved metric. To date, the only known obstruction to admitting a metric of non-negative sectional curvature is Gromov's bound \cite{Gromov} on the total Betti number. This theoretical gap motivates our investigation into the existence of metrics with non-negative sectional curvature on specific classes of manifolds, with the aim of contributing to a more complete understanding of the intricate interplay between the topology and geometry of manifolds.

From the results in \cite{GL,Stolz}, we observe an interesting phenomenon:
 for any closed, simply connected $m$-manifold $M$, there exists an integer $n>m$ such that the product manifold $M\times S^n$ admits a metric with positive scalar curvature.  
 This motivates the following question:
\begin{question}\label{quemain}
For any closed, simply connected $m$-manifold $M$, does there exist an integer $n>m$ such that the product  $M\times S^n$ admits a metric with non-negative sectional curvature (\(\mathrm{sec} \geq 0\))?
\end{question}

In \cite{Shen1}, it is shown that for any closed 6-manifold $M$ homotopy equivalent to $\mb{CP}^3$, if its first Pontrjagin class satisfies $\mathfrak p_1(M)\ge 4$, then $M\times S^7$ is diffeomorphic to the total space of the sphere bundle of a real rank-$8$ vector bundle over $\mb{CP}^3$. Furthermore, this total space admits a metric with \(\mathrm{sec} \geq 0\) \cite{Shen}. So $M\times S^7$ does as well. 

Note that $\mb{CP}^3$ is a compact rank-one symmetric space. Let $\xi$ be an arbitrary real vector bundle over a compact rank-one symmetric space. Denote by $k\in \mb{N}$ the trivial real vector bundle of rank $k$. By \cite{GAD2017}, for some $k$, the total space of the Whitney sum $\xi\oplus k$ admits a metric with \(\mathrm{sec} \geq 0\). In particular, the total space of the sphere bundle of $\xi\oplus k$ also admits a metric with \(\mathrm{sec} \geq 0\). This leads one to ask whether Question \ref{quemain} holds for closed, simply connected manifolds homotopy equivalent to compact rank-one symmetric spaces.

In this paper, we will show that Question \ref{quemain} holds for many closed, simply connected manifolds-including those homotopy equivalent to compact rank-one symmetric spaces. To present the main theorem of this paper, we first make some preliminary definitions.

Let $G$ be a compact Lie group, $H$ a closed subgroup of $G$. Let $\mm{Vect}_{\mb{R}}(G/H)$ denote the set of isomorphism classes of real vector bundles over $G/H$, and let $\mm{Rep}_\mb{R}(H)$ denote the set of isomorphism classes of real representations of $H$. Both $\mm{Vect}_{\mb{R}}(G/H)$ and $\mm{Rep}_\mb{R}(H)$ are semirings, and there exists a 
semiring homomorphism $\alpha:\mm{Rep}_\mb{R}(H)\to \mm{Vect}_{\mb{R}}(G/H)$. A real vector bundle over $G/H$
 is called \textbf{homogeneous} if its isomorphism class lies in the image of $\alpha$.

Two real vector bundles $\xi,\eta$ over $G/H$ are stably equivalent if there are trivial bundles $m_1,m_2\in \mb{N}$ such that $\xi\oplus m_1\cong\eta\oplus m_2$.  Let $S_{\mb{R}}(G/H)$ denote the set of stable classes of real vector bundles over $G/H$.

\begin{theorem1}
Let $G$ be a compact Lie group, $H$ a closed subgroup of $G$, $G/H$ a closed, simply connected manifold. Suppose every stable class in $S_{\mb{R}}(G/H)$ contains a homogeneous bundle, then for any closed manifold $M$ homotopy equivalent to $G/H$, there exists $n>\mm{dim}(M)$ such that $M\times S^n$ admits a metric with non-negative sectional curvature.	
\end{theorem1}

There exist many homogeneous spaces satisfying all conditions in Theorem A, such as 
 any simply connected compact rank-one symmetric space \cite{GAD2017} (including $S^n$, $\mb{CP}^n$, $\mb{HP}^n$, and the Cayley plane $\mm{Ca}\mb{P}^2$); any simply connected compact homogeneous space of dimension at most seven \cite{GoZi2021};  the Wallach manifolds $W^{12}=\mm{Sp}(3)/\mm{Sp}(1)^3$, and $W^{24}=F_4/\mm{Spin}(8)$ \cite{GoZi2021}.

Let $G/H$ be a homogeneous space as in Theorem A, $m$ denote its dimension. If $m\ge 5$, by \cite[II.4.10]{Brow1972}, there is an exact sequence of sets
$$P_{m+1}\to \Phi(G/H)\to [G/H,\mm{F/O}]\to P_m$$
where $\Phi(G/H)$ is the set of diffeomorphism classes of closed manifolds homotopy equivalent to $G/H$; $\mm{F/O}$ is the fiber of the fibration $\alpha^{\mm{O}}_{\mm{F}}:\mm{BO}\to \mm{BF}$, which maps from the classifying space $\mm{BO}$ of stable real vector bundles to the classifying space $\mm{BF}$ of stable spherical fibrations \cite{Rudyak}; and    
\[
P_m = 
\begin{cases}
\mathbb{Z}, & m \equiv 0 \pmod{4} \\
\mathbb{Z}_2, & m \equiv  2 \pmod{4} \\
0, & m\equiv 1\pmod 2
\end{cases}
\]
In particular, the set $\Phi(\mb{CP}^n)$ $(n\ge 3)$ has beed studied in \cite{Sull1996}.

The paper is structured as follows:
  In Section \ref{vectorbundle}, we discuss metrics on the total spaces of vector bundles over homogeneous spaces. In Section \ref{constru}, we construct the product manifolds and the associated sphere bundles.
  In Section \ref{normal5}, we establish a normal bordism and prove the main theorem.

\section{Vector bundles over homogeneous spaces}\label{vectorbundle}

Let $\mm{Vect}_{\mb{R}}(M)$ denote the set of isomorphism classes of real vector bundles over a manifold $M$. This set carries a semiring structure via the Whitney sum $\oplus$ and the tensor product $\otimes$ of bundles. For the trivial real vector bundle of rank $m$, we write simply $m$ when no confusion arises.

\begin{lemma}\cite[Lemma 9.3.5]{AGP2002}\label{inversofvectorbundle}
	Let $\xi\in \mm{Vect}_{\mb{R}}(M)$ with $M$ compact. Then there exists $\eta \in \mm{Vect}_{\mb{R}}(M)$ such that $\xi\oplus \eta$ is  trivial.
\end{lemma}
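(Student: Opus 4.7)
The plan is to realize $\xi$ as a subbundle of a trivial bundle of large enough rank and then take the orthogonal complement. Since $M$ is compact and $\xi$ is locally trivial of some rank $r$, I would first cover $M$ by finitely many open sets $U_1,\dots,U_k$ such that $\xi|_{U_i}$ admits a trivialization $\varphi_i:\xi|_{U_i}\to U_i\times \mb{R}^r$. Choose a smooth partition of unity $\{\rho_i\}$ subordinate to this cover.

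Next, writing $\pi:\xi\to M$ for the projection and $\mm{pr}_2$ for projection to the second factor, I would define a bundle map
$$\Phi:\xi\longrightarrow M\times \mb{R}^{kr},\qquad v\longmapsto \bigl(\pi(v),\;\rho_1(\pi(v))\,\mm{pr}_2(\varphi_1(v)),\;\dots,\;\rho_k(\pi(v))\,\mm{pr}_2(\varphi_k(v))\bigr),$$
setting $\rho_i(\pi(v))\mm{pr}_2(\varphi_i(v)):=0$ where $\varphi_i$ is undefined but $\rho_i=0$. On each fiber $\Phi$ is $\mb{R}$-linear, and at any point $x\in M$ there is at least one $i$ with $\rho_i(x)>0$, so $\Phi|_{\xi_x}$ is injective. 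Hence $\Phi$ realizes $\xi$ as a subbundle of the trivial bundle $M\times \mb{R}^{kr}$.

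Now I would equip $M\times \mb{R}^{kr}$ with its standard fiberwise inner product and define $\eta$ to be the fiberwise orthogonal complement of $\Phi(\xi)$. Using smooth local frames for $\Phi(\xi)$ and the Gram--Schmidt process, one gets smooth local frames for $\eta$, so $\eta\in \mm{Vect}_{\mb{R}}(M)$. By construction, $\xi\oplus \eta\cong M\times \mb{R}^{kr}$, which is the trivial bundle of rank $kr$.

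There is no serious obstacle; the only thing to be careful about is that compactness is used in an essential way when passing from an arbitrary trivializing cover to a \emph{finite} one, since otherwise the target $\mb{R}^{kr}$ would not have finite rank. The verification that the orthogonal complement of a smooth subbundle is again a smooth subbundle is standard, and follows from the smoothness of the orthogonal projection onto $\Phi(\xi)$ in local frames.
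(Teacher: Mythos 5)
Your proof is correct, and since the paper simply cites \cite[Lemma~9.3.5]{AGP2002} rather than proving the lemma itself, the right comparison is with the standard textbook argument — which is exactly what you give: a finite trivializing cover (using compactness), a subordinate partition of unity to build a fiberwise-injective bundle map $\Phi$ into a trivial bundle, and then the fiberwise orthogonal complement of $\Phi(\xi)$, with smoothness/local triviality of the complement coming from Gram--Schmidt applied to local frames. This is the same route the reference takes, and you have correctly identified where compactness is essential (to keep the ambient trivial bundle of finite rank).
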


From now on, assume $M$ is compact. Two bundles $\xi,\eta\in \mm{Vect}_{\mb{R}}(M)$ are stably equivalent if there exist trivial bundles $m_1,m_2$ with $\xi\oplus m_1\cong \eta\oplus m_2$. Let $S_{\mb{R}}(M)$ denote the set of stable classes of bundles over $M$, and let $\{\xi \}$ denote the stable class of $\xi$. By Lemma \ref{inversofvectorbundle}, the Whitney sum endows $S_{\mb{R}}(M)$ with the structure of an abelian group.


Assume $M$ is a homogeneous space, i.e., $M=G/H$, where $G$ is a compact Lie group and $H$ is a closed subgroup of $G$. Let $\mm{Rep}_\mb{R}(G)$ denote the set of isomorphism classes of real representations of $G$, which forms a semiring under the direct sum and tensor product of representations. We write $m$ for the trivial real representation of dimension $m$.
 For the inclusion $i : H \to  G$ of the closed subgroup $H$, let $i^\ast: \mm{Rep}_\mb{R}(G) \to  \mm{Rep}_\mb{R}(H)$ denote
the semiring homomorphism induced by restricting representations of $G$ to $H$.

 We now define a semiring homomorphism \cite{GAD2017}:
$$\alpha:\mm{Rep}_\mb{R}(H)\to \mm{Vect}_{\mb{R}}(G/H)$$
For each $\rho \in  \mm{Rep}_\mb{R}(H)$, consider the diagonal right action of $H$ on $G \times \mb{R}^m$ ($m=\mm{dim}(\rho)$) given by:
$$(G\times \mb{R}^m)\times H\to G\times \mb{R}^m$$
$$((g,x),h)\to (g\cdot h,\rho(h)^{-1}\circ x)$$
Here, $g\cdot h$ is the group multiplication in $G$; since $H$ is compact, $\rho(h)\in \mm{O}(m)$, and $\rho(h)^{-1}\circ x$ denotes the linear action on $x$.

The quotient space $E_\rho := (G \times \mb
{R}^m)/H$ is the total space of the vector bundle $\alpha(\rho)=:\{ \pi_\rho : E_\rho \to  G/H\}$, where $\pi_\rho$ is the natural projection map. Vector bundles constructed in this way are called homogeneous. 

Since $G$ is compact, it admits a biinvariant metric $\langle ,\rangle_{G}$ with nonnegative sectional curvature. Endow $G \times \mb{R}^m$ with the product of $\langle ,\rangle_{G}$ and the flat Euclidean metric on $\mb{R}^m$. By O'Neill's Theorem \cite{On}, $E_\rho$ inherits a quotient metric with non-negative curvature.

For each $\rho\in \mm{Rep}_\mb{R}(H)$, the sphere bundle of $\alpha(\rho)$ has the form
$$S^{m-1}\to S_\rho=(G\times S^{m-1})/H\to G/H$$
Endow $G \times S^{m-1}$ with the product of $\langle ,\rangle_{G}$ and the standard metric on $S^{m-1}$. By O'Neill's Theorem, $S_\rho$ inherits a quotient metric with non-negative curvature.

Suppose every stable class in $S_{\mb{R}}(G/H)$ contains a homogeneous bundle. Then, for any real vector bundle $\xi$ over $G/H$, there exist $\rho \in  \mm{Rep}_\mb{R}(H)$ and $n,m \in \mb{N}$ such that
$$\xi\oplus n\cong \alpha(\rho)\oplus m\cong \alpha(\rho\oplus m)$$
Thus, $\xi\oplus n$ is a homogeneous vector bundle,  leading to:
\begin{lemma}\label{spherebundle}
	Let $G$ be a compact Lie group and $H$ a closed subgroup. Suppose the stable class $\{\xi\}\in  S_\mb{R}(G/H)$ contains a homogeneous bundle. Then there exists $k \in  \mb{N}$ such that: 
	\begin{enumerate}
		\item the total space of the sphere bundle of $\xi \oplus k$ admits a metric with non-negative curvature;
		\item $\mm{rank}(\xi\oplus k)> \mm{dim}(G/H)+1$ and $\mm{rank}(\xi\oplus k)+\mm{dim}(G/H)\equiv 3\pmod 4$. 
	\end{enumerate}
\end{lemma}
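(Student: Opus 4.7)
The plan is to unpack the stably-homogeneous hypothesis, then observe that once $\xi\oplus n$ is homogeneous for some $n$, so is $\xi\oplus k$ for every $k\geq n$, which gives us enough room to adjust $k$ to satisfy the two numerical conditions in (2). After that, conclusion (1) follows immediately from the O'Neill quotient construction described just before the lemma statement.

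First, by hypothesis and Lemma \ref{inversofvectorbundle}, there exist $\rho\in\mm{Rep}_\mb{R}(H)$ and $n,m\in\mb{N}$ with
\[
\xi\oplus n\cong \alpha(\rho)\oplus m\cong \alpha(\rho\oplus m),
\]
so $\xi\oplus n$ is a homogeneous bundle. The key observation is that for any $t\in\mb{N}$, the trivial $H$-representation of dimension $t$ is a genuine element of $\mm{Rep}_\mb{R}(H)$, and $\alpha(\rho\oplus m\oplus t)\cong\alpha(\rho\oplus m)\oplus t$; hence $\xi\oplus(n+t)$ is homogeneous for every $t\geq 0$.

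Second, I would fix the two numerical conditions in (2). Writing $r(k)=\mm{rank}(\xi)+k$, the function $r(k)+\mm{dim}(G/H)\pmod 4$ takes every residue class as $k$ runs over any four consecutive integers, so I can pick $k\geq n$ large enough with $k\equiv 3-\mm{rank}(\xi)-\mm{dim}(G/H)\pmod 4$ and with $r(k)>\mm{dim}(G/H)+1$. This $k$ is the one we want.

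Third, for this chosen $k$, the bundle $\xi\oplus k$ is homogeneous, say $\xi\oplus k\cong\alpha(\rho')$ with $\rho'=\rho\oplus m\oplus(k-n)$ of dimension $r(k)$. The sphere bundle of $\alpha(\rho')$ is precisely $S_{\rho'}=(G\times S^{r(k)-1})/H$, and as recalled in the paragraph immediately preceding the lemma, endowing $G\times S^{r(k)-1}$ with the product of a biinvariant metric on $G$ and the round metric on the sphere makes the $H$-action isometric, so O'Neill's Theorem gives a submersion metric on $S_{\rho'}$ with $\mathrm{sec}\geq 0$. This yields (1). I do not anticipate a serious obstacle: the lemma is essentially a bookkeeping consequence of the homogeneous quotient construction, the only mild subtlety being to simultaneously arrange the rank inequality and the mod-$4$ congruence, which is resolved by the freedom to add arbitrarily many trivial summands.
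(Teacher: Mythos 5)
Your proposal is correct and matches the paper's intended argument: the paper's ``proof'' is just the paragraph immediately preceding the lemma statement, which shows $\xi\oplus n$ is homogeneous via $\xi\oplus n\cong\alpha(\rho\oplus m)$ and appeals to the O'Neill quotient metric on the sphere bundle, leaving the reader to add further trivial summands to hit the rank and mod-$4$ conditions in (2). You have simply spelled out that bookkeeping step explicitly (noting $\alpha(\rho\oplus m\oplus t)\cong\alpha(\rho\oplus m)\oplus t$ and choosing $k$ in the right residue class), which is exactly what the paper implicitly requires.
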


\section{Basic constructions}\label{constru}

Let $M$ be a closed simply connected manifold homotopy equivalent to a closed homogeneous space $G/H$, where $G$ is compact and $H$ is a closed subgroup. Clearly, $\mm{dim}(M)=\mm{dim}(G/H)$. 

Let $h:M\to G/H$ be a homotopy equivalence, with $h^{-1}:G/H\to M$ its homotopy inverse. 
For the tangent bundle $\tau(M)$ of $M$, we have 
$$h^\ast\circ h^{-1\ast}(\tau(M))\cong \tau(M)$$
Denote the bundle $h^{-1\ast}(\tau(M))$ over $G/H$ by $\xi$.

By Lemma \ref{inversofvectorbundle}, there is a bundle $-\tau(G/H)$ over $G/H$ such that $\tau(G/H)\oplus (-\tau(G/H))$ is trivial. Define a bundle over $G/H$ by $$\eta=:\xi\oplus (-\tau(G/H))$$

\begin{lemma}\label{constrlemma}
	Assume $G/H$ satisfies all conditions in Theorem A. Then there exists $k \in \mb{N}$ such that the total space $N$ of the sphere bundle of $\eta \oplus k$ admits a metric with non-negative  curvature. Moreover, $\mm{rank}(\eta\oplus k)>\mm{dim}(G/H)+1$, and $\mm{rank}(\eta\oplus k)+\mm{dim}(G/H)\equiv 3\pmod 4$.
\end{lemma}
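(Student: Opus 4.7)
The plan is to apply Lemma \ref{spherebundle} directly to the bundle $\eta$ over $G/H$. Since $G/H$ satisfies all the conditions in Theorem A, every stable class in $S_{\mb{R}}(G/H)$ contains a homogeneous bundle; in particular, the stable class $\{\eta\}\in S_{\mb{R}}(G/H)$ contains a homogeneous bundle, so the hypothesis of Lemma \ref{spherebundle} is verified with $\xi$ replaced by $\eta$.

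Invoking Lemma \ref{spherebundle} then produces an integer $k\in\mb{N}$ such that the total space of the sphere bundle of $\eta\oplus k$ carries a metric of non-negative sectional curvature and satisfies both $\mm{rank}(\eta\oplus k)>\mm{dim}(G/H)+1$ and $\mm{rank}(\eta\oplus k)+\mm{dim}(G/H)\equiv 3\pmod{4}$. Denoting this total space by $N$ fixes the notation for the normal bordism argument in Section \ref{normal5}.

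There is no genuine obstacle here: the statement of this lemma is engineered to match the conclusion of Lemma \ref{spherebundle} applied to the specific bundle $\eta=\xi\oplus(-\tau(G/H))$, and the hypothesis that $\{\eta\}$ contains a homogeneous bundle is immediate from Theorem A's assumption on $G/H$. The substantive geometric content, namely the construction of non-negatively curved metrics on homogeneous sphere bundles via O'Neill's theorem applied to the biinvariant-metric submersion $G\times S^{m-1}\to (G\times S^{m-1})/H$, has already been carried out in Section \ref{vectorbundle}, and the arithmetic adjustments (increasing $k$ by an appropriate integer to meet the rank inequality and the congruence mod $4$) are handled in the proof of Lemma \ref{spherebundle} itself.
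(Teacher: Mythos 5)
Your proof is correct and takes exactly the approach of the paper: Lemma \ref{constrlemma} follows by applying Lemma \ref{spherebundle} to $\eta$, noting that the hypothesis of Theorem A guarantees $\{\eta\}$ contains a homogeneous bundle. The paper's own proof is just the one-line remark ``This follows directly from Lemma \ref{spherebundle}.''
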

\begin{proof}
	This follows directly from Lemma \ref{spherebundle}.
\end{proof}

Let $n=\mm{rank}(\eta\oplus k)$.
Let $\pi:N\to G/H$ denote the bundle projection. We have $\mm{dim}(N)=\mm{dim}(G/H)+n-1$.

Next we consider the following bundle morphisms
\[
\xymatrix@C=1.3cm{
S^{n-1}\ar[r]\ar[d]&D^n\ar[r]\ar[d]&\mb{R}^n\ar[d]^-{}\\
N\ar[d]^-{\pi}\ar[r]^-{ i}&{W}\ar[d]^-{f}\ar[r] &  E\ar[d]^{p}\\
G/H\ar@{=}[r]&G/H\ar@{=}[r]&G/H
}
\] 
where $E$ is the total space of the bundle $\eta\oplus k$, ${W}$ is the total space of the disk bundle of $\eta\oplus k$. In particular,
$N$ is the boundary of ${W}$, ${W}$ is an embedded submanifold of $E$. By the Gysin sequence \cite[p.438]{Hatcher} and $n-1>\mm{dim}(G/H)$, the cohomology ring of $N$ satisfies
\begin{equation}
	H^\ast(N)\cong H^\ast(G/H\times S^{n-1}) \label{cohomoN}
\end{equation}


It is well known that the tangent bundle of $E$ satisfies 
$$\tau(E)\cong p^\ast(\tau(G/H)\oplus \eta \oplus k)$$
Since the dimensions of ${W}$ and $E$ are equal, we obtain 
\begin{equation}
\tau({W})\cong f^\ast(\tau(G/H)\oplus \eta\oplus k)\label{tauW}
\end{equation}
Since $ i:N\to W$ is an embedding, we have
\begin{equation}
	\tau({N})\oplus 1\cong {i}^\ast(\tau(W)) \label{tauN}
\end{equation}


Finally we consider the product manifold $M\times S^{n-1}$. Clearly, 
\begin{equation}
	H^\ast(M\times S^{n-1})\cong H^\ast(G/H\times S^{n-1})\cong H^\ast(N) \label{cohomoMSm-1}
\end{equation}
and $\mm{dim}(M\times S^{n-1})=\mm{dim}(N)$. Furthermore, 
$M\times S^{n-1}$ is the boundary of $M\times D^{n}$ whose tangent bundle is the product of $\tau(M)$ and $\tau(D^n)$ where $\tau(D^n)$ is a trivial bundle. 

Consider the composition
$$r:M\stackrel{\Delta}{\to}M\times M\stackrel{id\times c}{\longrightarrow}M\times D^n$$
where $\Delta$ is the diagonal map, $c$ is a constant map. It is easy to verify that $r$ induces isomorphisms on $H_i$ for all $i\ge 0$. Note that both $M$ and $M\times D^n$ are CW complexes. Since $M$ is simply connected, $r$ is a homotopy equivalence. Moreover, we have
$$
	r^\ast(\tau(M\times D^n))\cong \tau(M)\oplus n$$
Let $r^{-1}$ be the homotopy inverse of $r$. Then, \begin{equation}
	r^{-1\ast}(\tau(M)\oplus n)\cong \tau(M\times D^n)\label{rhomoto}
\end{equation} 

For the composition $$\mathfrak f:M\times D^n\stackrel{r^{-1}}{\longrightarrow } M\stackrel{h}{\to }G/H$$
 the tangent bundle of $M\times D^{n}$ satisfies
\begin{equation}
	\tau(M\times D^{n})\cong\mathfrak f^\ast\xi \oplus n \label{tauMtimesD}
\end{equation}

\section{Normal bordism}\label{normal5}

For a manifold $X$, a normal bundle of $X$ is a bundle $\nu(X)$  such that $\tau(X)\oplus \nu(X)$ is trivial. All normal bundles of $X$ lie in the same stable class in $S_{\mb{R}}(X)$.



Recall the bundle $\xi$ over $G/H$ (see Section \ref{constru}). By Lemma \ref{inversofvectorbundle}, there exists a bundle $-\xi$ such that $\xi\oplus (-\xi)$ is trivial.
 

Recall the maps $f:W\to G/H$ and $i:N\to W$ from Section \ref{constru}.

\begin{lemma}\label{normalN}
(1)$f^\ast(-\xi)$ is a normal bundle of $W$.

(2) $(f\circ i)^\ast(-\xi)$ is a normal bundle of $N$.
\end{lemma}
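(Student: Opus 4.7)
The plan is to verify both claims by a direct bundle computation, using the explicit formulas for $\tau(W)$ and $\tau(N)$ recorded in equations (\ref{tauW}) and (\ref{tauN}) of Section \ref{constru}, together with the definition $\eta = \xi \oplus (-\tau(G/H))$ and the existence of the formal inverses $-\tau(G/H)$ and $-\xi$.

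For part (1), I would start from (\ref{tauW}), which gives $\tau(W) \cong f^\ast(\tau(G/H) \oplus \eta \oplus k)$. Adding $f^\ast(-\xi)$ on both sides and substituting $\eta = \xi \oplus (-\tau(G/H))$, the right-hand side becomes $f^\ast\bigl(\tau(G/H) \oplus \xi \oplus (-\tau(G/H)) \oplus k \oplus (-\xi)\bigr)$. Reordering the Whitney sum and collecting the pairs $\tau(G/H) \oplus (-\tau(G/H))$ and $\xi \oplus (-\xi)$, both of which are trivial by construction, the whole expression is the pullback of a trivial bundle, hence trivial. This shows $\tau(W) \oplus f^\ast(-\xi)$ is trivial, i.e.\ $f^\ast(-\xi)$ is a normal bundle of $W$.

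For part (2), I would pull back the triviality from (1) along $i$. Applying $i^\ast$ to $\tau(W) \oplus f^\ast(-\xi)$ yields a trivial bundle over $N$, and by (\ref{tauN}) this pullback equals $\tau(N) \oplus 1 \oplus (f \circ i)^\ast(-\xi)$. Thus $(f \circ i)^\ast(-\xi) \oplus 1$ satisfies the defining property of a normal bundle of $N$, and in particular $(f \circ i)^\ast(-\xi)$ lies in the stable class of normal bundles of $N$, which (as the excerpt recalls at the start of Section \ref{normal5}) is precisely what is meant by calling it a normal bundle in this stable sense.

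There is no real obstacle here; the argument is purely formal manipulation of stable classes in $S_{\mb{R}}(G/H)$ pulled back by $f$ and $i$. The only minor subtlety to be careful about is the distinction between a normal bundle in the strict sense (direct sum with $\tau$ is trivial on the nose) and in the stable sense (same stable class). Since Section \ref{normal5} opens by noting that all normal bundles of a given manifold share a stable class, and since the arguments in the sequel will only use the stable class, treating $(f\circ i)^\ast(-\xi)$ as a normal bundle of $N$ is justified.
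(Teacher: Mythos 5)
Your part (1) is correct and is essentially the paper's argument: substitute $\eta = \xi \oplus (-\tau(G/H))$ into $\tau(W) \cong f^\ast(\tau(G/H)\oplus\eta\oplus k)$ and cancel the inverse pairs.

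Part (2) has a genuine gap. What you actually prove is that $\tau(N)\oplus (f\circ i)^\ast(-\xi)\oplus 1$ is trivial, i.e.\ that $(f\circ i)^\ast(-\xi)\oplus 1$ is a normal bundle of $N$, or equivalently that $(f\circ i)^\ast(-\xi)$ is \emph{stably} a normal bundle. You then argue this is good enough because the paper remarks that all normal bundles share a stable class. But that remark goes in the wrong direction: it says every (honest) normal bundle lands in that stable class, not that every representative of the stable class is an honest normal bundle. The paper's definition of normal bundle is strict --- $\tau(X)\oplus\nu(X)$ is trivial, no extra trivial summand --- and the lemma asserts $(f\circ i)^\ast(-\xi)$ satisfies it on the nose. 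What you are missing is a destabilization step. The paper supplies it by invoking Kervaire--Milnor, Groups of homotopy spheres I, Lemma 3.5: a vector bundle $\zeta$ over $N$ with $\operatorname{rank}(\zeta) > \dim N$ that becomes trivial after adding a trivial line bundle is already trivial. Since $\operatorname{rank}\bigl(\tau(N)\oplus (f\circ i)^\ast(-\xi)\bigr) = \dim N + \operatorname{rank}(-\xi) > \dim N$, the bundle is in the stable range, and the extra trivial summand can be stripped off. Without this step your argument establishes a weaker statement than the one claimed.

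For what it's worth, the destabilization is not merely cosmetic here: the subsequent application of Freedman's theorem (Theorem \ref{freed}) is phrased in terms of actual normal maps $g_i^\ast(\gamma) = \nu(K_i)$, so the paper is careful to produce an on-the-nose normal bundle of the right rank, not just a stable class.
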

\begin{proof}
Recall $\tau(G/H)\oplus \eta\cong \xi\oplus s$ for some $s\in \mb{N}$. The first statement follows from
	 \eqref{tauW}. 
	
	By \eqref{tauN}, $\tau(N)\oplus (h\circ i)^\ast(-\xi) \oplus 1$ is trivial. Since $\mm{rank}(\tau(N)\oplus (h\circ i)^\ast(-\xi) )>\mm{dim}(N)$, \cite[Lemma 3.5]{KervaMilnor} implies $\tau(N)\oplus (h\circ i)^\ast(-\xi)$ is trivial. This completes the proof.
\end{proof}

Recall the map $\mathfrak f:M\times D^n\to G/H$ from Section \ref{constru}. Let $\mathfrak i:M\times S^{n-1}\to M\times D^n$ be the natural embedding.
\begin{lemma}\label{normalMtimesS}
(1) $\mathfrak f^\ast(-\xi)$ is a normal bundle of $M\times D^n$.
 
 (2) $(\mathfrak f\circ \mathfrak i)^\ast(-\xi)$ is a normal bundle of $M\times S^{n-1}$.
\end{lemma}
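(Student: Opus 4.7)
The plan is to treat the two parts in order, noting that Part (1) is a direct manipulation of \eqref{tauMtimesD} in the same spirit as the first half of Lemma \ref{normalN}, while Part (2) will parallel the second half of Lemma \ref{normalN}, using that $\mathfrak i: M\times S^{n-1}\to M\times D^n$ is a codimension-one embedding with trivial normal line bundle, and then invoking the same stable-to-unstable triviality result \cite[Lemma 3.5]{KervaMilnor}.

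For Part (1), I would compute directly. Starting from \eqref{tauMtimesD} we have $\tau(M\times D^n)\cong \mathfrak f^\ast\xi\oplus n$, so
\begin{equation*}
\tau(M\times D^n)\oplus \mathfrak f^\ast(-\xi)\cong \mathfrak f^\ast\xi\oplus n\oplus \mathfrak f^\ast(-\xi)\cong \mathfrak f^\ast\bigl(\xi\oplus (-\xi)\bigr)\oplus n.
\end{equation*}
Since $\xi\oplus(-\xi)$ is trivial over $G/H$ by construction, its pullback under $\mathfrak f$ is trivial, and hence so is the sum above. This proves that $\mathfrak f^\ast(-\xi)$ is a normal bundle of $M\times D^n$.

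For Part (2), I would exploit that $M\times S^{n-1}$ is the boundary of $M\times D^n$, so the outward normal gives $\mathfrak i^\ast(\tau(M\times D^n))\cong \tau(M\times S^{n-1})\oplus 1$. Pulling back the identity from Part (1) under $\mathfrak i$ yields
\begin{equation*}
\tau(M\times S^{n-1})\oplus (\mathfrak f\circ \mathfrak i)^\ast(-\xi)\oplus 1\cong \mathfrak i^\ast\bigl(\tau(M\times D^n)\oplus \mathfrak f^\ast(-\xi)\bigr),
\end{equation*}
which is trivial by (1). Since $\mm{rank}\bigl(\tau(M\times S^{n-1})\oplus (\mathfrak f\circ\mathfrak i)^\ast(-\xi)\bigr)\geq (m+n-1)+1 > \mm{dim}(M\times S^{n-1})$, where $m=\mm{dim}(M)$, one application of \cite[Lemma 3.5]{KervaMilnor} removes the extra trivial line summand and concludes that $\tau(M\times S^{n-1})\oplus (\mathfrak f\circ\mathfrak i)^\ast(-\xi)$ itself is trivial.

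The main obstacle, such as it is, is simply the bookkeeping on ranks needed to apply \cite[Lemma 3.5]{KervaMilnor}. This is inconvenience rather than substance: the rank hypothesis is guaranteed because $\tau(M\times S^{n-1})$ already has rank equal to $\mm{dim}(M\times S^{n-1})$, and $-\xi$ may be taken to have positive rank (one could, if needed, absorb an extra trivial summand into $-\xi$ from the start, since enlarging $-\xi$ by a trivial bundle preserves the relation $\xi\oplus(-\xi)=\mm{trivial}$). Otherwise the argument is a direct mirror of Lemma \ref{normalN}, with the pair $(W,N)$ replaced by $(M\times D^n,\, M\times S^{n-1})$ and the map $f$ replaced by $\mathfrak f$.
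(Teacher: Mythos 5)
Your proof is correct and takes essentially the same approach as the paper: Part (1) is the direct computation from \eqref{tauMtimesD} and the triviality of $\xi\oplus(-\xi)$, and Part (2) pulls back along $\mathfrak i$, adds the trivial normal line summand of the boundary, and then strips it off via \cite[Lemma 3.5]{KervaMilnor}, exactly as in the proof of Lemma \ref{normalN}. Your extra remark that one should ensure $\mm{rank}(-\xi)\geq 1$ (by absorbing a trivial line into $-\xi$ if necessary) is a minor but legitimate bit of bookkeeping that the paper leaves implicit.
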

\begin{proof}
	By \eqref{tauMtimesD}, the first statement follows. 

Note that $	\tau({M}\times S^{n-1})\oplus 1\cong \mathfrak{i}^\ast(\tau(M\times D^n))$.
	The second statement follows by the same argument as in the proof of Lemma \ref{normalN}.
\end{proof}

 Next we introduce a key theorem:
 \begin{theorem}\cite{Freed}\label{freed}
 Let $B$ be a simply connected, finite simplicial complex with a vector bundle $\gamma$. Let $g_i:K^{2n}_i\to B$ (for $i=1,2$ and $n\ge 3$) be normal maps from closed manifolds, i.e., $g^\ast_i(\gamma)=\nu(K_i)$. Suppose $n$ is odd, $g_1$ and $g_2$ are normally bordant, $g_i$ is $n$-connected, and $B_n(K_1) = B_n(K_2)$. Then $K_1$ is diffeomorphic to $K_2$.  
 \end{theorem}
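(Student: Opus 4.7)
The strategy is to upgrade a given normal bordism between $(K_1,g_1)$ and $(K_2,g_2)$ to an $h$-cobordism by surgery, and then invoke the Smale $h$-cobordism theorem. Since $B$ is simply connected and each $g_i$ is $n$-connected with $n\ge 3$, the manifolds $K_i$ are simply connected of dimension $2n\ge 6$, so the $h$-cobordism theorem will be applicable once an $h$-cobordism is in hand. Let $(W^{2n+1}, G)$ denote a chosen normal bordism, so that $\partial W = K_1 \sqcup (-K_2)$, the map $G\colon W\to B$ restricts to $g_i$ on $K_i$, and $G^\ast(\gamma)$ is stably a normal bundle of $W$.

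First I would perform surgery on the interior of $W$ in every dimension $k\le n-1$ to arrange that $G$ is $n$-connected. Each such surgery is realized by an embedded sphere $S^k\hookrightarrow \mm{int}(W)$; the framing data required to carry out the surgery is furnished by the identification of $\nu(W)$ with $G^\ast(\gamma)$, and there is no fundamental-group obstruction since $B$ is simply connected. After these surgeries $H_k(W,\partial W)=0$ for $k\le n$, so by Poincar\'e--Lefschetz duality the only nontrivial kernel homology of $G$ is concentrated in the middle range.

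Next I would try to kill the remaining middle-dimensional kernel by surgery on embedded $n$-spheres in $\mm{int}(W)$. The obstruction to doing so lies in the odd-dimensional surgery $L$-group $L_{2n+1}(\pi_1(B))=L_{2n+1}(\{e\})$, which vanishes for every $n$ (both $L_1(\{e\})=0$ and $L_3(\{e\})=0$). The hypothesis that $n$ is odd ensures the relevant quadratic form in the middle dimension is of the skew-symmetric type for which this vanishing is directly applicable. The hypothesis $B_n(K_1)=B_n(K_2)$ enters here by matching the Euler characteristics of the two boundary components and guaranteeing that the middle-dimensional kernel modules at the two ends of $W$ have the same rank; this is what permits the algebraic surgery obstruction to be realized geometrically by a balanced collection of surgeries that eliminates middle-dimensional homology relative to \emph{both} boundary components simultaneously.

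After this process, both inclusions $K_i\hookrightarrow W'$ become homology, and hence (by simple connectivity and Whitehead) homotopy, equivalences, so that $W'$ is a simply connected $h$-cobordism of dimension $2n+1\ge 7$. The $h$-cobordism theorem then gives $W'\cong K_1\times[0,1]$, yielding the desired diffeomorphism $K_1\cong K_2$. I expect the main obstacle to be the middle-dimensional step: although the $L$-group obstruction is abstractly zero, exhibiting the surgeries geometrically requires pairing generators of the kernel modules on the two ends of $W$, and the role of the Betti-number hypothesis is precisely to make that pairing available and to ensure that the two boundary components are being handled symmetrically rather than, say, improving one end at the expense of the other.
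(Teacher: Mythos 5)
The paper does not prove Theorem~\ref{freed}: it is quoted verbatim from Freedman \cite{Freed} and used as a black box, so there is no ``paper's own proof'' to compare against. Your sketch does capture the right global strategy --- take a normal bordism $W^{2n+1}$, surger its interior to turn it into an $h$-cobordism, and finish with the Smale $h$-cobordism theorem (dimension $2n+1\ge 7$, simply connected) --- and this is indeed the shape of Freedman's argument. But the middle step, as you have written it, has a genuine gap rather than just a missing detail.

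First, the target condition is misstated: after surgery below the middle you want $H_k(W,K_i)=0$ for $k<n$ for \emph{each} boundary component $i=1,2$, not $H_k(W,\partial W)=0$ for $k\le n$. Even for a genuine $h$-cobordism one has $H_k(W,\partial W)\cong H_{k-1}(K_2)$, which is typically nonzero, so the condition you are aiming for is not the right one. Second, and more seriously, the appeal to $L_{2n+1}(\{e\})=0$ is not available here. The odd-dimensional vanishing of the simply connected $L$-group is the obstruction theory for degree-one normal maps of Poincar\'e pairs, and in the present situation $B$ is merely a finite simplicial complex --- not a Poincar\'e complex --- and the maps $g_i$ are $n$-connected normal maps, not homotopy equivalences, so $(W;K_1,K_2)\to B$ is not a surgery problem in the Wall sense. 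Relatedly, your own observation that $L_{2n+1}(\{e\})=0$ for \emph{every} $n$ shows that the $L$-theory framing cannot be what forces the hypothesis ``$n$ odd'': if the obstruction really lived in $L_{2n+1}(\{e\})$ the theorem would hold with no parity restriction. The parity of $n$ (and the Betti-number hypothesis) enter through a handle-theoretic cancellation argument in the middle dimensions --- after reducing $W$ rel $K_1$ to handles of index $n$ and $n+1$, one must pair and cancel them while keeping control of both ends simultaneously, and the sign $(-1)^n$ in the intersection/linking data is what makes this go through for $n$ odd --- rather than through the vanishing of a simply connected $L$-group. So the outline is plausible but the justification of the crucial middle-dimensional step does not stand as written; to repair it you would need to replace the $L$-group appeal with the actual handle-cancellation argument (or cite Freedman \cite{Freed} for it, as the paper does).
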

 
 In the above theorem, $g_i$ is $n$-connected if it induces isomorphisms on $\pi_k$ for $k<n$, and an epimorphism on $\pi_n$.
 $B_n$ denotes the $n$-th Betti number. From the proof of Theorem \ref{freed} in \cite{Freed}, $g_1$ and $g_2$ are normally bordant if there exists a connected manifold $\mathcal W$ satisfies:
 \begin{enumerate}
 	\item Its boundary is the disjoint union of $K_1$ and $-K_2$ where $-K_2$ is $K_2$ with the opposite orientation. 
 	\item There exists a map $g:\mc{W}\to B$ such that $g^\ast\gamma$ is a normal bundle of $\mc{W}$, and the restriction $g|_{K_i}$ is homotopy equivalent to $g_i$.
 \end{enumerate}
 
Here, we set $B=G/H$ with vector bundle $-\xi$. We aim to apply Theorem \ref{freed} to show:
\begin{lemma}\label{diffproducttoN}
	$M\times S^{n-1}$ is diffeomorphic to $N$. 
\end{lemma}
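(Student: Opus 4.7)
The plan is to verify the hypotheses of Theorem \ref{freed} with $B = G/H$, bundle $\gamma = -\xi$, and normal maps $g_1 = \mathfrak f \circ \mathfrak i : M \times S^{n-1} \to G/H$ and $g_2 = f \circ i : N \to G/H$; that these are normal maps is exactly Lemmas \ref{normalN}(2) and \ref{normalMtimesS}(2). Writing $\mm{dim}(K_i) = \mm{dim}(G/H) + n - 1 = 2n'$, the congruence in Lemma \ref{constrlemma} forces $n'$ odd, and by enlarging $k$ if necessary we may assume $n' \ge 3$.

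For the connectivity hypothesis I would first observe that $f: W \to G/H$ is a homotopy equivalence (the zero-section retraction of the disk bundle) and that $\mathfrak f$ is homotopic to $h$ composed with the projection $M \times D^n \to M$. Hence $g_1$ and $g_2$ are homotopic to the sphere-bundle projections $M \times S^{n-1} \to M \to G/H$ and $N \to G/H$ respectively. Each of these is $(n-1)$-connected because the fiber is $S^{n-1}$, and the inequality $n > \mm{dim}(G/H) + 1$ forces $n' < n - 1$, so both $g_i$ are $n'$-connected. The Betti number equality $B_{n'}(K_1) = B_{n'}(K_2)$ is immediate from \eqref{cohomoMSm-1}.

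The key construction is the normal bordism $\mc{W}$. I would form $\mc{W}$ as the interior connected sum $(M \times D^n)\,\#\,(-W)$: excise open balls about interior points $p \in M \times D^n$ and $q \in W$, and identify the two boundary spheres by an orientation-reversing diffeomorphism, yielding a connected manifold of dimension $\mm{dim}(G/H)+n$ with boundary $(M \times S^{n-1}) \sqcup (-N)$. After small homotopies of $\mathfrak f$ inside the ball at $p$ and of $f$ inside the ball at $q$, arranging both maps to be constant at a common value $\ast \in G/H$ on a neighborhood of the gluing sphere, the two maps glue to a map $g: \mc{W} \to G/H$ with $g|_{M \times S^{n-1}} \simeq g_1$ and $g|_{N} \simeq g_2$. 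Over the contractible neck region the pullback $g^*(-\xi)$ is trivial, and so are the restrictions of the normal bundles of the two pieces, so the stable isomorphisms $\mathfrak f^*(-\xi) \cong \nu(M \times D^n)$ and $f^*(-\xi) \cong \nu(W)$ supplied by Lemmas \ref{normalN}(1) and \ref{normalMtimesS}(1) patch together to $g^*(-\xi) \cong \nu(\mc{W})$.

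The main obstacle will be this gluing step: carrying out the connect sum so that the piecewise map $g$ is smooth, the normal-bundle identifications agree across the gluing sphere, and the orientation conventions produce the correctly signed boundary components. Once $\mc{W}$ has been produced, every hypothesis of Theorem \ref{freed} is verified and the conclusion $M \times S^{n-1} \cong N$ follows.
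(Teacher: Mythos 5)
Your proposal is correct and follows essentially the same strategy as the paper: verify the hypotheses of Theorem \ref{freed} with $B = G/H$, $\gamma = -\xi$, normal maps $\mathfrak f\circ\mathfrak i$ and $f\circ i$, and produce the normal bordism by an interior connected sum of $M\times D^n$ with $W$. The only minor variation is that you obtain $n'$-connectedness from the long exact sequence of the $S^{n-1}$-bundle projections (after identifying $g_1,g_2$ up to homotopy with those projections), whereas the paper deduces it from the cohomology-ring isomorphisms \eqref{cohomoN}, \eqref{cohomoMSm-1} together with simple connectivity; both are valid, and the rest matches the paper's argument, with your remarks on gluing, orientations, and patching of normal-bundle data supplying details the paper leaves implicit.
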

\begin{proof}
 Lemma \ref{normalN} (2) and \ref{normalMtimesS} (2) imply that both $\mathfrak{f\circ i}:M\times S^{n-1}\to G/H$ and $f\circ i:N\to G/H$ are normal maps.
 
Recall $\mm{dim}(M)=\mm{dim}(G/H)$, $n-1>\mm{dim}(G/H)$, and 
$$\mm{dim}(M\times S^{n-1})=\mm{dim}(N)=\mm{dim}(G/H)+n-1 \equiv 2\pmod 4$$ 
Thus, $r=\frac{\mm{dim}(N)}{2}$ is odd. Furthermore, $n-1>r>\mm{dim}(G/H)$.

By the cohomology rings \eqref{cohomoN} and \eqref{cohomoMSm-1}, both $\mathfrak{f\circ i}$ and $f\circ i$ induce isomorphisms on $H_k$ for $k<r$, and epimorphisms on $H_r$. Moreover, $B_r(N)=B_r(M\times S^{n-1})$. Since $M\times S^{n-1}, N,$ and $G/H$ are simply connected, both $\mathfrak{f\circ i}$ and $f\circ i$ are $r$-connected.

By Lemma \ref{normalN} (1) and \ref{normalMtimesS} (1), we can construct a normal map $\mathcal F$ from the connected sum $W\# (M\times D^{n})$ to $G/H$ such that $\mathcal F|_{N}=f\circ i$ and $\mc{F}|_{M\times S^{n-1}}=\mathfrak{f\circ i}$. This shows  $\mathfrak{f\circ i}$ and $f\circ i$ are normally bordant, completing the proof. 
\end{proof}

\begin{proof}[Proof of Theorem A]
	By Lemma \ref{constrlemma}, $N$ admits a metric with $\mm{sec}\ge 0$. By Lemma \ref{diffproducttoN}, we complete this proof. 
\end{proof}


\end{document}